\DeclareSymbolFontAlphabet{\mathcal}{symbols}
\title[]{Singular decompositions of a cap product} 
\date{\today}
\author{David Chataur}
\address{Lafma\\
Universit\'e de Picardie Jules Verne\\
33, rue Saint-Leu\\
80039 Amiens Cedex~1\\
         France}
\email{David.Chataur@u-picardie.fr}
\author{Martintxo Saralegi-Aranguren}
\address{Laboratoire de Math{\'e}matiques de Lens\\  
      EA 2462 \\
      Universit\'e d'Artois\\
         SP18, rue Jean Souvraz\\
          62307 Lens Cedex\\
         France}
\email{martin.saraleguiaranguren@univ-artois.fr}
\author{Daniel Tanr\'e}
\address{D\'epartement de Math{\'e}matiques\\
         UMR 8524 \\
         Universit\'e de Lille~1\\
         59655 Villeneuve d'Ascq Cedex\\
         France}
\email{Daniel.Tanre@univ-lille1.fr}
\thanks{
This research was supported through the program ``Research in Pairs'' at the Mathematisches Forschungsinstitut Oberwolfach in 2016. The authors thank the MFO for its generosity and hospitality.
The third author was also supported by the MINECO grant MTM2013-41768-P and the ANR-11-LABX-0007-01  ``CEMPI''}
\keywords{Intersection homology; cap product; Poincar\'e duality}
\subjclass[2010]{55N33, 57P10, 57N80}
\theoremstyle{plain}
\newtheorem{theorem}{Theorem}
\newtheorem{proposition}{Proposition}[section]
\theoremstyle{definition}
\newtheorem{definition}[proposition]{Definition}
\newtheorem{example}[proposition]{Example}
\theoremstyle{remark}
\newtheorem{remark}[proposition]{Remark}
\newcommand{\secref}[1]{Section~\ref{#1}}
\newcommand{\thmref}[1]{Theorem~\ref{#1}}
\newcommand{\propref}[1]{Proposition~\ref{#1}}
\newcommand{\remref}[1]{Remark~\ref{#1}}
\newcommand{\defref}[1]{Definition~\ref{#1}}
\def\R{{\mathbb R}}
\def\ov{\overline}
\def\cM{{\mathcal M}}
\def\1{{\mathbf 1}}
\def\tc{{\mathtt c}}
\def\tv{{\mathtt v}}
\def\N{\mathbb{N}}
\def\R{\mathbb{R}}
\def\Z{\mathbb{Z}}
\def\id{{\rm id}}
\def\tN{{\widetilde{N}}}
\def\GM{{\rm GM}}
\def\TW{{\rm TW}}
\def\rc{{\mathring{\tc}}}
\def\tDelta{{\widetilde{\Delta}}}
\def\tcap{\,{\widetilde{\cap}\;}}
\def\fa{{\vartriangleleft}}
\begin{document}

 \begin{abstract}
 In the case of a compact orientable pseudomanifold, 
 a well-known theorem of M.~Gor\-esky and R.~MacPherson
 says that the cap product with a fundamental class factorizes through
the intersection homology groups. 
 In this work, we show that this classical cap product is compatible with a cap product 
 in intersection (co)-homology, that we have previously 
 introduced. 
 If the pseudomanifold is also normal,  for any commutative ring of coefficients, the existence of a classical
 Poincar\'e duality isomorphism is equivalent to the existence of an isomorphism between the intersection homology groups
 corresponding to the zero and the top perversities. 
  \end{abstract}
 
  \maketitle

Let $X$ be a compact oriented pseudomanifold and $[X]\in H_{n}(X;\Z)$ be its fundamental class.
In \cite{MR572580}, M. Goresky and R. MacPherson prove that the 
\emph{Poincar\'e duality map} defined by the cap product
$-\cap [X]\colon H^{k}(X;\Z)\to H_{n-k}(X;\Z)$
can be factorized as
\begin{equation}\label{equa:GM}
H^{k}(X;\Z)\xrightarrow{\alpha^{\ov{p}}}
H_{n-k}^{\ov{p}}(X;\Z)
\xrightarrow{\beta^{\ov{p}}} H_{n-k}(X;\Z),
\end{equation}
where the groups $H_{i}^{\ov{p}}(X;\Z)$ are the intersection homology groups for the perversity $\ov{p}$.
The study of the Poincar\'e duality map via a filtration on homology classes is also considered in
the thesis of C.~McCrory \cite{MR2622359}, \cite{MR530047}, using a Zeeman's spectral sequence.

In \cite[Section 8.1.6]{FriedmanBook}, G. Friedman asks for a factorization of the Poincar\'e duality map
through a cap product defined in the context of an intersection cohomology
recalled in \secref{sec:examples}.
He proves it with a restriction on the torsion part of the intersection cohomology.
In this work we answer positively, without restriction and for any commutative ring $R$ of coefficients, in the context of a cohomology $H^*_{\TW,\ov{p}}(-)$ 
obtained via a simplicial blow-up 
with an intersection cap product,
$-\cap [X]\colon H^k_{\TW,\ov{p}}(X;R)\xrightarrow{\cong} H_{n-k}^{\ov{p}}(X;R)$,
 defined in \cite[Section 11]{CST4}  and recalled in \secref{sec:cap}. 
Roughly, our main result consists in the fact that this ``intersection cap product'' corresponds 
to the ``classical cap product''. 
This property can be expressed as the commutativity of the next diagram.

\begin{theorem}\label{thm:factorisation}
Let $X$ be a  compact oriented $n$-dimensional pseudomanifold. For any perversity $\ov{p}$, 
 there exists a commutative diagram,
\begin{equation}\label{equa:factorisation}
 \xymatrix{
H^k(X;R) \ar[rr]^-{-\cap [X]}  \ar[d]_{\cM_{\ov{p}}^*}
\ar[rrd]^-{\alpha^{\ov{p}}}
&&
H_{n-k}(X;R)
\\
H^k_{\TW,\ov{p}}(X;R)
\ar[rr]^-{-\cap [X]}_{\cong}
&&
H_{n-k}^{\ov{p}}(X;R).
\ar[u]_{\beta^{\ov{p}}}
}
\end{equation}
\end{theorem}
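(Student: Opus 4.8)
The commutativity of \eqref{equa:factorisation} decomposes into two triangles sharing the diagonal $\alpha^{\ov p}$. The upper triangle is the assertion $-\cap[X]=\beta^{\ov p}\circ\alpha^{\ov p}$, which is precisely the Goresky--MacPherson factorization \eqref{equa:GM}: here $\beta^{\ov p}$ is the canonical comparison $H^{\ov p}_{n-k}(X;R)\to H_{n-k}(X;R)$ induced by the inclusion of intersection chains into singular chains, and $\alpha^{\ov p}$ is the map characterized by $\beta^{\ov p}\circ\alpha^{\ov p}=-\cap[X]$. Thus, once these identifications are recorded, the upper triangle holds by \cite{MR572580}, and the entire content of the theorem reduces to the lower-left triangle
\[
\alpha^{\ov p}=(-\cap[X])\circ\cM_{\ov p}^*.
\]

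The plan is to prove this identity at the (co)chain level and then pass to (co)homology. First I would fix a cycle $\xi$, in the blow-up intersection chain complex of perversity $\ov p$, whose image in singular chains represents $[X]$; since $X$ is $n$-dimensional and oriented, such a $\ov p$-allowable representative of the fundamental class exists and is the geometric input to both cap products. The two sides of the identity are then computed by explicit formulas: on the right, $c\mapsto\cM_{\ov p}^*(c)\tcap\xi$, where $\tcap$ is the intersection cap product of \cite[Section~11]{CST4}; on the left, the chain-level realization of $\alpha^{\ov p}$, namely the classical cap product $c\mapsto c\cap\xi$, whose value is $\ov p$-allowable and hence defines a class in $H^{\ov p}_{n-k}(X;R)$.

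The heart of the matter is to compare these two formulas. Both are obtained from a diagonal approximation: the classical cap product uses the Alexander--Whitney diagonal on singular chains, while $\tcap$ uses the blow-up diagonal on the complex of blown-up filtered simplices $\tDelta$. I would show that $\cM_{\ov p}^*$ intertwines the two, so that precomposition with $\cM_{\ov p}^*$ collapses the blow-up diagonal to the ordinary front-face/back-face splitting and the two prescriptions produce the same intersection chain under the identification of the blow-up complex with the Goresky--MacPherson intersection complex. Since both constructions arise from diagonal approximations that are natural in $X$, and the blown-up simplices serve as acyclic models, any two such natural chain maps agreeing on the models are chain homotopic; capping the resulting homotopy with the fixed cycle $\xi$ then upgrades the generator-wise comparison to the required equality in homology.

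The step I expect to be the main obstacle is exactly this diagonal comparison. The blow-up replaces each cone direction of a filtered simplex by a cylinder, so its diagonal interleaves the simplicial Alexander--Whitney map with the extra cylinder diagonals, and is genuinely more intricate than the classical one. The real work is to track the perversity bookkeeping of all the resulting terms and to verify that the contributions not coming from the ordinary splitting either vanish on the image of $\cM_{\ov p}^*$ or are annihilated upon projection to the $\ov p$-intersection chains, leaving precisely the classical cap product.
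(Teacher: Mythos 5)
Your instinct that everything reduces to a chain-level comparison of the two cap products is correct, but the proposal has genuine gaps, both in the framing and at the decisive step. On the framing: you take $\alpha^{\ov p}$ to be ``the map characterized by $\beta^{\ov p}\circ\alpha^{\ov p}=-\cap[X]$'' and dispose of the upper triangle by citing \cite{MR572580}. Neither move is available. The Goresky--MacPherson factorization is proved in the PL category (the paper keeps the letter $\alpha^{\ov p}$ but constructs a new, singular map), and the stated property cannot characterize $\alpha^{\ov p}$, because $\beta^{\ov p}$ is in general not injective. The paper avoids this circle by \emph{defining} $\alpha^{\ov p}:=(-\cap[X])\circ\cM^*_{\ov p}$, so that the lower triangle is definitional and the entire content of the theorem is the commutativity of the outer square. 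Your substitute chain-level model for $\alpha^{\ov p}$, namely $c\mapsto c\cap\xi$ together with the claim that this classical cap product of an arbitrary cochain with an allowable cycle is $\ov p$-allowable, is asserted but never proved; in the paper this allowability is a \emph{consequence} of the comparison (\propref{prop:capcap} combined with \propref{prop:lecap}), not an input to it.

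More seriously, the heart of the matter --- the comparison itself --- is exactly the step you defer as ``the main obstacle'', so the proof is not complete; and the tool you propose for it is too weak for your own formulation. An acyclic-models argument yields at best a natural chain homotopy with values in ordinary singular chains, which would prove that the two composites agree in $H_{n-k}(X;R)$ (the outer square), but not that they agree in $H^{\ov p}_{n-k}(X;R)$ (your lower triangle); for the latter, the homotopy itself would have to take values in the complex of $\ov p$-intersection chains, which is precisely the perversity bookkeeping you leave open. The paper shows that no homotopy is needed: \propref{prop:capcap} proves the identity on the nose, $\mu^*_{\Delta}(c)\cap\tDelta=c\cap[\Delta]$ for every regular filtered simplex, by a direct local computation (reduced to the case $\Delta_{0}\ast\Delta_{1}$ via (\ref{equa:decomposelemu}) and \remref{rem:capcone}), and then (\ref{equa:M}) gives $\cM_{\ov{0}}(c)\cap\sigma=c\cap\sigma$ exactly, simplex by simplex; commutativity of (\ref{equa:factorisation}) follows immediately, with the bottom isomorphism quoted from \cite[Th\'eor\`eme D]{CST4}. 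To repair your argument you would have to replace the homotopy plan by such an exact identity, or else prove that your homotopy can be chosen allowable --- which is the hard part.
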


In \cite{MR572580}, the spaces and maps  of (\ref{equa:GM}) appear in the piecewise linear setting. 
In the previous statement we are working with singular
homology and cohomology. However, we keep the same letter $\alpha^{\ov{p}}$.
The morphism $\beta^{\ov{p}}$ is generated by  the inclusion of the corresponding complexes
and the morphism
$\cM^*_{\ov{p}}$ is defined in \secref{sec:proofs}.

For a normal compact oriented $n$-dimensional pseudomanifold,
the specification of the previous statement to the constant perversity with value~0
gives a commutative diagram,
where $\ov t$ is the top perversity defined by $\ov{t}(i)=i-2$.
\begin{equation}\label{equa:capandcap}
 \xymatrix{
H^k(X;R) \ar[rr]^-{-\cap [X]}  \ar[dd]_{\cM_{\ov{0}}^*}^-{\cong}
&&
H_{n-k}(X;R)
\\
&&
H^{\ov{t}}_{n-k}(X;R)\ar[u]^{\cong}_{\beta^{\ov{t}}}\\
H^k_{\TW,\ov{0}}(X;R)
\ar[rr]^-{-\cap [X]}_{\cong}
&&
H_{n-k}^{\ov{0}}(X;R).
\ar[u]_{\beta^{\ov{0},\ov{t}}}
}
\end{equation}
As a consequence, we have the next characterization, expressed in  Goresky-MacPherson intersection homology. 
This extends to any commutative ring of coefficients the criterion established in  \cite{MR572580} and \cite{FriedmanBook} .

\begin{theorem}\label{thm:0t}
Let $X$ be a normal compact oriented $n$-dimensional pseudomanifold.
Then the following conditions are equivalent.
\begin{enumerate}[(i)]
\item  The Poincar\'eŽ duality map
$-\cap [X]\colon H^{k}(X;R)\to H_{n-k}(X;R)$
 is an isomorphism.
\item The natural map $\beta^{\ov{0},\ov{t}}\colon H_{n-k}^{\ov{0}}(X;R)\to H_{n-k}^{\ov{t}}(X;R)$,
induced by the canonical inclusion of the corresponding complexes,
is an isomorphism.
\end{enumerate}
\end{theorem}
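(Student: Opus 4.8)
The plan is to extract the equivalence directly from the commutative diagram (\ref{equa:capandcap}) obtained by specializing \thmref{thm:factorisation} to the perversity $\ov 0$, turning the statement into a ``two-out-of-three'' argument once three of the four maps appearing in the resulting factorization of the Poincar\'e duality map are recognized as isomorphisms.

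First I would record the factorization read off from (\ref{equa:capandcap}). Chasing that diagram, the classical cap product decomposes as
\[
-\cap [X]\;=\;\beta^{\ov t}\circ\beta^{\ov 0,\ov t}\circ\big(-\cap[X]\big)_{\TW,\ov 0}\circ\cM^*_{\ov 0},
\]
where $\big(-\cap[X]\big)_{\TW,\ov 0}\colon H^k_{\TW,\ov 0}(X;R)\to H_{n-k}^{\ov 0}(X;R)$ denotes the intersection cap product. By \thmref{thm:factorisation} this middle map is an isomorphism for every perversity, in particular for $\ov 0$.

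Next I would argue that normality forces the two outer vertical maps to be isomorphisms as well. For $\cM^*_{\ov 0}$ the point is that, on a normal pseudomanifold, the blow-up cohomology $H^*_{\TW,\ov 0}(X;R)$ coincides with ordinary singular cohomology $H^*(X;R)$; this is where normality enters on the cohomological side and is the source of the label $\cong$ on the left-hand arrow of (\ref{equa:capandcap}). Dually, for $\beta^{\ov t}\colon H^{\ov t}_{n-k}(X;R)\to H_{n-k}(X;R)$ I would invoke the classical identification of top-perversity intersection homology with ordinary singular homology on a normal pseudomanifold, which gives the label $\cong$ on the upper-right arrow. Both identifications hold over an arbitrary commutative ring $R$, and this is precisely what permits the conclusion in full generality.

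With $\cM^*_{\ov 0}$, the intersection cap product, and $\beta^{\ov t}$ all isomorphisms, the displayed factorization shows that $-\cap[X]$ is an isomorphism if and only if the remaining factor $\beta^{\ov 0,\ov t}$ is one, which is exactly the equivalence of (i) and (ii). The only genuine content is the verification of the two normality-dependent identifications, and that is where I expect the essential use of the normality hypothesis to lie; the passage from those identifications to the stated equivalence is then a purely formal diagram chase.
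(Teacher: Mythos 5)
Your proposal is correct and follows essentially the same route as the paper: the paper's proof also specializes the diagram of \thmref{thm:factorisation} to $\ov{p}=\ov{0}$, cites \propref{prop:Cero} for $\cM^*_{\ov{0}}$ (where normality enters), the identification of $H^{\ov{t}}_{n-*}(X;R)$ with $H_{n-*}(X;R)$ for normal pseudomanifolds for $\beta^{\ov{t}}$, and the intersection Poincar\'e duality isomorphism for the bottom cap product, before concluding by the same two-out-of-three diagram chase.
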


We have chosen the setting of the original perversities of 
M. Goresky and R. MacPherson \cite{MR572580}.
However, the previous results remain true in more general situations.

In the last section, we quote the existence of a cup product structure on $H^*_{\TW,\ov{\bullet}}(-)$ and 
detail how it combines with this factorization.
We are also looking for a factorization involving an intersection cohomology defined from the dual of intersection
chains (see \cite{zbMATH06243610}) and denoted $H^*_{\GM,\ov{p}}(-)$.
In the case of a locally torsion free pseudomanifold (see \cite{MR699009}), in particular if $R$ is a field,
a factorization as in (\ref{equa:factorisation}) and (\ref{equa:capandcap}) has been established by 
G.~Friedman in \cite[Section 8.1.6]{FriedmanBook}
for that cohomology. In this book, G.~Friedman asks also for such factorization through a cap product without this restriction on torsion.
In the last section, we give an example showing that such factorization with $H^*_{\GM,\ov{p}}(-)$ may not exist.

\tableofcontents

The coefficients for homology and cohomology are taken in a commutative ring $R$ (with unity) and 
we do not mention it explicitly in the rest of this work.
The degree of an element $x$ of a graded module is denoted~$|x|$.
All the maps $\beta$, with subscript or superscript, are induced by canonical inclusions of complexes.
For any topological space, $Y$, we denote by
$\tc Y=Y\times [0,1]/Y\times\{0\}$
the cone on $Y$ and by
$\rc Y=Y\times [0,1[/Y\times\{0\}$
the open cone on $Y$.

\smallskip
We thank the referee for making suggestions which have contributed to improve the writing.

\section{Background on intersection homology and cohomology}\label{sec:recall}

We recall the basic definitions and properties we need, sending the reader to \cite{MR572580}, \cite{FriedmanBook},
\cite{CST1} or \cite{CST4} for more details.

\begin{definition}\label{def:pseudomanifold}
An $n$-dimensional \emph{pseudomanifold}  is a topological space, $X$, filtered by closed subsets,
$$
X_{-1}=\emptyset\subseteq X_0 \subseteq X_1 \subseteq \dots \subseteq X_{n-2} = X_{n-1} \subsetneqq X_n =X,
$$
such that, for any $i\in\{0,\dots,n\}$, 
$X_i\backslash X_{i-1}$ is an $i$-dimensional topological manifold or the empty set. Moreover, for each point
$x \in X_i \backslash X_{i-1}$, $i\neq n$, there exist
\begin{enumerate}[(i)]
\item an open neighborhood $V$ of $x$ in $X$, endowed with the induced filtration,
\item an open neighborhood $U$ of $x$ in  $X_i\backslash X_{i-1}$, 
\item a compact pseudomanifold $L$  of dimension $n-i-1$, whose cone $\rc L$ is endowed with the filtration
$(\rc L)_{i}=\rc L_{i-1}$,%
\item a homeomorphism, $\varphi \colon U \times \rc L\to V$, 
such that
\begin{enumerate}[(a)]
\item $\varphi(u,\tv)=u$, for any $u\in U$, where $\tv$  is the apex of the cone $\rc L$,
\item $\varphi(U\times \rc L_{j})=V\cap X_{i+j+1}$, for all $j\in \{0,\dots,n-i-1\}$.
\end{enumerate}
\end{enumerate}
The pseudomanifold $L$ is called a  \emph{link} of $x$. 
The pseudomanifold $X$ is called \emph{normal} if its links are connected.
\end{definition}

As in \cite{MR572580}, a \emph{perversity} is a map $\ov{p}\colon \N\to\Z$ such that  
$\ov{p}(0)=\ov{p}(1)=\ov{p}(2)=0$ and
$\ov{p}(i)\leq\ov{p}(i+1)\leq\ov{p}(i)+1$ for all $i\geq 2$. Among them, we quote
the null perversity $\ov{0}$ constant with value 0 and the \emph{top perversity} defined by
$\ov{t}(i)=i-2$. For any perversity $\ov{p}$, the perversity $D\ov{p}:=\ov{t}-\ov{p}$
is called the \emph{complementary perversity} of $\ov{p}$.

\medskip
In this work, we compute the intersection homology of a pseudomanifold $X$ via \emph{filtered simplices.} 
They are 
singular simplices $\sigma\colon \Delta\to X$ such that $\Delta$ admits a decomposition in join products,
$\Delta=\Delta_{0}\ast\dots\ast\Delta_{n}$ with
$\sigma^{-1}X_{i}=\Delta_{0}\ast\dots\ast\Delta_{i}$.
The \emph{perverse degree} of $\sigma$ is defined by
$\|\sigma\|=(\|\sigma\|_{0},\dots,\|\sigma\|_{n})$
with
$\|\sigma\|_{i}=\dim(\Delta_{0}\ast\dots\ast\Delta_{n-i})$.
A filtered simplex is called \emph{$\ov{p}$-allowable} if 
\begin{equation}\label{equa:permis}
\|\sigma\|_{i}\leq \dim\Delta-i+\ov{p}(i),
\end{equation}
for any $i\in\{0,\dots,n\}$. 
A singular (filtered) chain $\xi$ is $\ov{p}$-allowable if it can be written as a linear combination of $\ov{p}$-allowable filtered simplices,
and of \emph{$\ov{p}$-intersection} if $\xi$ and $\partial \xi$ are $\ov{p}$-allowable.
We denote by $C_{*}^{\ov{p}}(X)$ the complex of singular (filtered) chains of $\ov{p}$-intersection. 
In \cite[Th\'eor\`eme A]{CST3}, we have proved that $C_{*}^{\ov{p}}(X)$ is quasi-isomorphic to the 
singular intersection chain complex introduced by H. King in \cite{MR800845}.

Given an euclidean simplex $\Delta$, we denote by $N_{*}(\Delta)$ 
and  $N^*(\Delta)$ the associated simplicial chain and cochain complexes. 
For each face $F $ of $\Delta$, we write $\1_{F}$ the cochain of $N^*(\Delta)$ taking the value 1 on $F$ and 0 otherwise. 
If $F$ is a face of $\Delta$, we denote by $(F,0)$ the same face viewed as face of the cone $\tc\Delta=\Delta\ast [\tv]$ and by $(F,1)$ 
the face $\tc F$ of $\tc \Delta$. By extension, we use also the notation $(\emptyset,1)=\tc \emptyset =[\tv]$ for the 
apex. The corresponding cochains are denoted $\1_{(F,\varepsilon)}$ for $\varepsilon =0$ or $1$.

\medskip
A filtered simplex $\sigma\colon \Delta=\Delta_{0}\ast\dots\ast\Delta_{n}\to X$ is called \emph{regular}
 if $\Delta_{n}\neq \emptyset$. 
The cochain complex we use for cohomology is built on the blow-up's of regular filtered simplices. More precisely,
we set first
$$\tN^*_\sigma=\tN^*(\Delta)=N^*(\tc\Delta_0)\otimes\dots\otimes N^*(\tc\Delta_{n-1})\otimes N^*(\Delta_n).$$
With the previous convention, a basis of $\tN^*(\Delta)$ is composed of elements of the form
$\1_{(F,\varepsilon)}=\1_{(F_{0},\varepsilon_{0})}\otimes\dots\otimes \1_{(F_{n-1},\varepsilon_{n-1})}\otimes \1_{F_{n}}
\in\tN^*(\Delta)$, where 
$\varepsilon_{i}\in\{0,1\}$ and
$F_{i}$ is a face of $\Delta_{i}$ for $i\in\{1,\dots,n\}$ or the empty set with $\varepsilon_{i}=1$ if $i<n$.
We set
$|\1_{(F,\varepsilon)}|_{>s}=\sum_{i>s}(\dim F_{i}+\varepsilon_{i})$,
with the convention $\dim \emptyset=-1$.

\begin{definition}\label{def:degrepervers}
Let $\ell$ be an element of $\{1,\ldots,n\}$ and
$\1_{(F,\varepsilon)}
\in\tN^*(\Delta)$.
The  \emph{$\ell$-perverse degree} of 
$\1_{(F,\varepsilon)}\in N^*(\Delta)$ is
$$
\|\1_{(F,\varepsilon)}\|_{\ell}=\left\{
\begin{array}{ccl}
-\infty&\text{if}
&
\varepsilon_{n-\ell}=1,\\
|\1_{(F,\varepsilon)}|_{> n-\ell}
&\text{if}&
\varepsilon_{n-\ell}=0.
\end{array}\right.$$
In the general case of a cochain $\omega = \sum_b\lambda_b \ \1_{(F_b,\varepsilon_b) }\in\tN^*(\Delta)$ with 
$\lambda_{b}\neq 0$ for all $b$,
the \emph{$\ell$-perverse degree} is
$$\|\omega \|_{\ell}=\max_{b}\|\1_{(F_b,\varepsilon_b)}\|_{\ell}.$$
By convention, we set $\|0\|_{\ell}=-\infty$.
\end{definition}

If $\delta_{\ell}\colon \Delta' 
\to\Delta$ 
 is a face operator
 (i.e. an inclusion of a face of codimension~1)
  we denote by
$\partial_{\ell}\sigma$ the filtered simplex defined by
$\partial_{\ell}\sigma=\sigma\circ\delta_{\ell}\colon 
\Delta'\to X$.
The \emph{Thom-Whitney complex} of $X$ is the cochain complex $\tN^*(X)$ composed of the elements, $\omega$, 
associating to each regular filtered simplex
 $\sigma\colon \Delta_{0}\ast\dots\ast\Delta_{n}\to X$,
an element
 $\omega_{\sigma}\in \tN^*_{\sigma}$,  
such that $\delta_{\ell}^*(\omega_{\sigma})=\omega_{\partial_{\ell}\sigma}$,
for any face operator
 $\delta_{\ell}\colon\Delta'\to\Delta$
 with $\Delta'_{n}\neq\emptyset$. 
 (Here $\Delta'=\Delta'_{0}\ast\dots\ast\Delta'_{n}$ is the induced filtration.)
 The differential $d \omega$ is defined by
 $(d \omega)_{\sigma}=d(\omega_{\sigma})$.
 The  \emph{$\ell$-perverse degree} of $\omega\in \tN^*(X)$ is the supremum of all the $\|\omega_{\sigma}\|_{\ell}$ for all
 regular filtered simplices $\sigma\colon\Delta\to X$.
 
 A \emph{cochain $\omega\in\tN^*(X)$ is $\ov{p}$-allowable} if $\| \omega\|_{\ell}\leq \ov{p}(\ell)$ for any $\ell\in \{1,\dots,n\}$,
 and of $\ov{p}$-intersection if $\omega$ and $d\omega$ are $\ov{p}$-allowable. 
 We denote $\tN^*_{\ov{p}}(X)$ the complex of $\ov{p}$-intersection cochains and by 
 $H_{\TW,\ov{p}}^*({X})$ its homology called
 \emph{Thom-Whitney cohomology} (henceforth \emph{TW-cohomology)} of $X$ 
 for the perversity~$\ov{p}$. 
 
\section{Cap product and intersection homology}\label{sec:cap}

We first recall the definition and some basic properties of a cap product in intersection (co)-homology already
introduced in \cite[Section 11]{CST4}.

Let $\Delta=[e_{0},\dots,e_{m}]$ be an euclidean simplex. 
We denote by $[\Delta]$ its face of maximal dimension.
The classical cap product
 $$-\cap [\Delta]\colon N^*(\Delta)\to N_{m-*}(\Delta)$$
  is defined by
$$
\1_{F}\cap [\Delta]=\left\{
\begin{array}{cl}
[e_{r},\dots,e_{m}] 
&
\text{if } F=[e_{0},\dots,e_{r}], \text{ for any } r\in \{0,\dots,m\},\\
0
&
\text{otherwise.}
\end{array}\right.
$$
We extend it to filtered simplices $\Delta=\Delta_{0}\ast\dots\ast\Delta_{n}$
as follows. 

\medskip
Let $\tDelta=\tc\Delta_{0}\times\dots\times\tc\Delta_{n-1}\times\Delta_{n}$.
If $\1_{(F,\varepsilon)}=\1_{(F_{0},\varepsilon_{0})}\otimes\dots\otimes \1_{(F_{n-1},\varepsilon_{n-1})}\otimes \1_{F_{n}}
\in \tN^*(\Delta)$, we define:
\begin{eqnarray}
\1_{(F,\varepsilon)} \tcap \tDelta
&=&
(-1)^{\nu(F,\varepsilon,\Delta)}
(\1_{(F_{0},\varepsilon_{0})}\cap \tc[\Delta_{0}])\otimes\dots\otimes
(\1_{F_{n}}\cap [\Delta_{n}]),\nonumber\\
&\in&
\tN_{*}(\Delta):=N_{*}(\tc\Delta_{0})\otimes\dots\otimes N_{*}(\tc \Delta_{n-1})\otimes N_{*}(\Delta_{n}),\label{equa:lecapsurdelta}
\end{eqnarray}
where $\nu(F,\varepsilon,\Delta)=\sum_{j=0}^{n-1}(\dim\Delta_{j}+1) \,(\sum_{i=j+1}^n|(F_{i},\varepsilon_{i})|)
$, with the convention $\varepsilon_{n}=0$.

\medskip
We define now a morphism,
$\mu^{\Delta}_{*}\colon \tN_{*}(\Delta)\to N_{*}(\Delta)$, 
by describing it on the elements
$(F,\varepsilon)=(F_{0},\varepsilon_{0})\otimes\dots\otimes (F_{n-1},\varepsilon_{n-1})\otimes F_{n}$.
Let $\ell$ be the smallest integer, $j$, such that $\varepsilon_{j}=0$. We set
\begin{equation}\label{equa:eclatement}
\mu^{\Delta}_{*}(F,\varepsilon)=\left\{
\begin{array}{cl}
F_{0}\ast\dots\ast F_{\ell}&\text{if } 
\dim (F,\varepsilon) =\dim (F_{0}\ast\dots\ast F_{\ell}),\\
0&\text{otherwise.}
\end{array}\right.
\end{equation}
(Note that $\mu_{*}^{\Delta}(F,\varepsilon)\neq 0$ if and only if all pairs $(F_{i},\varepsilon_{i})$ are
either $(\text{vertex},0)$ or $(\emptyset,1)$
for any $i>\ell$.)
If this image is not equal to zero, the application $\mu_{*}^{\Delta}$ consists of a replacement of the tensor product
$(F_{0},\varepsilon_{0})\otimes \dots\otimes
(F_{n-1},\varepsilon_{n-1})\otimes F_{n}$
by the join
$F_{0}\ast\dots\ast F_{\ell}$.
Therefore, if we set
$\nabla=\Delta_{1}\ast\dots\ast\Delta_{n}$, the application
$\mu_{*}^{\Delta}$ can be decomposed as
\begin{equation}\label{equa:decomposelemu}
N_{*}(\tc \Delta_{0})\otimes \tN_{*}(\nabla)
\xrightarrow{\id \otimes \mu_{*}^{\nabla}}
N_{*}(\tc\Delta_{0})\otimes N_{*}(\nabla)
\xrightarrow{\mu_{*}^{\Delta_{0}\ast\nabla}}
N_{*}(\Delta).
\end{equation}
Moreover, the application $\mu^{\Delta}_{*}\colon \tN_{*}(\Delta)\to N_{*}(\Delta)$ is a chain map, 
\cite[Proposition 11.10]{CST4} which allows the next local and global definitions of the intersection cap product.

\begin{definition}\label{def:localglobalcap}
Let $\Delta=\Delta_{0}\ast\dots\ast\Delta_{n}$ be a regular filtered simplex of dimension~$m$. 
The \emph{intersection cap product}
$-\cap \tDelta\colon \tN^*(\Delta)\to N_{m-*}(\Delta)$ is defined by
$$\omega\cap \tDelta=\mu^{\Delta}_{*}(\omega\tcap\tDelta).$$
\end{definition}

In short, we have introduced three maps, called cap products,
\begin{itemize}
\item the classical one, 
$-\cap [\Delta]\colon N^*(\Delta)\to N_{m-*}(\Delta)$,
\item its extension at the blow-up level, 
$-\tcap \tDelta\colon \tN^*(\Delta)\to \tN_{m-*}(\Delta)$,
\item and finally the projection on chains,
$-\cap \tDelta\colon \tN^*(\Delta)\to N_{m-*}(\Delta)$,
which is our intersection cap product.
We note that the apexes of the cones do not appear in an intersection cap product
since it is an element of $N_{*}(\Delta)$.
\end{itemize}

\begin{definition}\label{def:globalcap}
Let $X$ be a pseudomanifold,  $\omega\in\tN^*(X)$ and  $\sigma\colon \Delta_{\sigma}\to X$ a filtered simplex.
\emph{The intersection cap product}
$-\cap -\colon \tN^k(X)\otimes C_{k+j}(X)\to C_{j}(X)$ is defined by the linear extension of
$$\omega\cap \sigma=\left\{
\begin{array}{cl}
\sigma_{*}(\omega_{\sigma}\cap \tDelta_{\sigma}) 
&
\text{if } \sigma \text{ is regular,}\\
0&
\text{otherwise.}
\end{array}\right.$$
\end{definition}

\begin{proposition}[{\cite[Proposition 11.16]{CST4}}]\label{prop:lecap}
Let $X$ be an $n$-dimensional pseudomanifold and let $\ov{p}$, $\ov{q}$ be two  perversities. 
The cap product defines a chain map,
$$-\cap - \colon \tN^k_{\ov{p}}(X)\otimes C_{k+j}^{\ov{q}}(X) \to C_{j}^{\ov{p}+\ov{q}}(X).$$
\end{proposition}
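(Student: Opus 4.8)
The plan is to reduce everything to a single regular filtered simplex and then to separate the two contents of the statement: that $-\cap-$ is a chain map, and that it respects the perverse filtrations, i.e. that it sends $\tN^k_{\ov p}(X)\otimes C^{\ov q}_{k+j}(X)$ into $C^{\ov p+\ov q}_j(X)$. By \defref{def:globalcap} the global product is computed simplexwise through $\sigma_*$, so both properties may be checked on each regular $\sigma\colon\Delta=\Delta_0\ast\dots\ast\Delta_n\to X$, where the local product factors as $-\cap\tDelta=\mu^\Delta_*\circ(-\tcap\tDelta)$ of \defref{def:localglobalcap}.

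For the chain-map property I would first record a Leibniz formula at the blow-up level: since $-\tcap\tDelta$ is, up to the sign $\nu(F,\varepsilon,\Delta)$, the tensor product of classical simplicial cap products appearing in \eqref{equa:lecapsurdelta}, and the classical cap product obeys the usual boundary formula, one gets $\partial(\omega\tcap\tDelta)=\pm(d\omega)\tcap\tDelta\pm\omega\tcap\partial\tDelta$ with explicit Koszul signs. Composing with $\mu^\Delta_*$, a chain map by \cite[Proposition 11.10]{CST4}, transports this to $-\cap\tDelta$. Globalizing, the only subtlety is the passage $\omega_{\partial_\ell\sigma}=\delta_\ell^*\omega_\sigma$, valid precisely for the regular faces ($\Delta'_n\neq\emptyset$); the non-regular faces contribute nothing since the cap product vanishes on them (\defref{def:globalcap}). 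Assembling the face terms with the signs of $\partial$ then yields $\partial(\omega\cap\xi)=\pm(d\omega)\cap\xi\pm\omega\cap\partial\xi$, the asserted chain-map identity for the total differential of $\tN^*(X)\otimes C_*(X)$. Granting this, closure under boundary in the target is formal: if $\omega$ is $\ov p$-intersection and $\xi$ is $\ov q$-intersection, then $d\omega$ is again $\ov p$-allowable and $\partial\xi$ again $\ov q$-allowable, so both summands are $(\ov p+\ov q)$-allowable as soon as the allowability estimate below is in hand.

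The real work is this allowability estimate, which I would carry out for a basis cochain $\omega=\1_{(F,\varepsilon)}$ and a $\ov q$-allowable regular simplex $\sigma$. Tracing \eqref{equa:lecapsurdelta} and \eqref{equa:eclatement}, one identifies $\omega\cap\sigma$ as $\pm\sigma_*(\tau)$ for a single face $\tau=\tau_0\ast\dots\ast\tau_n$ of $\Delta$, where $\tau_j$ is the back face of $\Delta_j$ complementary to $F_j$ when $\varepsilon_j=0$, and $\tau_j=\emptyset$ when $\varepsilon_j=1$ (the latter forcing $F_j=\Delta_j$). Writing $c_j=\dim\Delta_j-\dim\tau_j$ for the codimension removed in the $j$-th block, this produces the clean bookkeeping $c_j=\dim F_j+\varepsilon_j$, hence $\sum_j c_j=|\omega|$ and, for each $\ell$,
\[
\dim\tau-\|\sigma_{|\tau}\|_\ell=\sum_{j>n-\ell}(\dim\Delta_j+1)-|\1_{(F,\varepsilon)}|_{>n-\ell}.
\]
Combining the $\ov q$-allowability of $\sigma$, in the form $\sum_{j>n-\ell}(\dim\Delta_j+1)\geq\ell-\ov q(\ell)$ read off from \eqref{equa:permis}, with the $\ov p$-allowability $|\1_{(F,\varepsilon)}|_{>n-\ell}=\|\omega\|_\ell\leq\ov p(\ell)$, gives exactly $\|\sigma_{|\tau}\|_\ell\leq\dim\tau-\ell+(\ov p+\ov q)(\ell)$, the desired $(\ov p+\ov q)$-allowability of $\tau$.

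The main obstacle is the index $\ell$ with $\varepsilon_{n-\ell}=1$: there $\|\omega\|_\ell=-\infty$ carries no information and $|\1_{(F,\varepsilon)}|_{>n-\ell}$ may exceed $\ov p(\ell)$, so the clean argument does not apply verbatim. The point to exploit is that then $\tau_{n-\ell}=\emptyset$, so the front join $\tau_0\ast\dots\ast\tau_{n-\ell}$ either is empty — whence $\sigma_{|\tau}$ meets no stratum of the relevant codimension, $\|\sigma_{|\tau}\|_\ell=-\infty$, and allowability is automatic — or it agrees with the front join at the next index $\ell_0>\ell$ having $\varepsilon_{n-\ell_0}=0$, so that $\|\sigma_{|\tau}\|_\ell=\|\sigma_{|\tau}\|_{\ell_0}$. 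The estimate must then be transported from $\ell_0$ back to $\ell$, and it is here that the defining inequalities $\ov p(i)\leq\ov p(i+1)\leq\ov p(i)+1$, together with the fact that the top block $\Delta_n$ always contributes a genuine regular piece, have to be used to absorb the discrepancy; checking this reduction carefully, rather than the formal Leibniz identity, is the genuinely delicate part. I expect that tracking the signs $\nu(F,\varepsilon,\Delta)$ and the Koszul signs throughout, while routine, is the other place demanding attention, and that the inductive decomposition \eqref{equa:decomposelemu} of $\mu^\Delta_*$ is the natural device for organizing both the Leibniz computation and the apex bookkeeping.
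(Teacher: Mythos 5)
You should first note that the paper itself contains no proof of this statement: \propref{prop:lecap} is quoted verbatim from \cite[Proposition 11.16]{CST4}. So your proposal has to be judged against what any correct proof must accomplish, and measured that way it has a genuine gap: both halves of your argument are carried out one simplex at a time, using only the $\ov{p}$-allowability of the cochain and the $\ov{q}$-allowability of the single regular simplex $\sigma$, and at that level the statement is false. Take $n=4$, $\Delta=[a]\ast\emptyset\ast\emptyset\ast\emptyset\ast[b_0,b_1]$ (so $\sigma$ sends $a$ into a point stratum and the rest into the regular part), $\ov{p}=\ov{q}=\ov{t}$, and
\[
\omega_{\sigma}=\1_{([a],0)}\otimes\1_{(\emptyset,1)}\otimes\1_{(\emptyset,1)}\otimes\1_{(\emptyset,1)}\otimes\1_{[b_0b_1]}.
\]
Then $\|\omega_{\sigma}\|_{4}=1\leq \ov{t}(4)$ and every other perverse degree of $\omega_{\sigma}$ and of $d\omega_{\sigma}$ equals $-\infty$, so the cochain is $\ov{t}$-allowable with allowable differential; and $\|\sigma\|_{i}=0=\dim\Delta-i+\ov{t}(i)$ for all $i$, so $\sigma$ is $\ov{t}$-allowable. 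Your own description of the cap (back faces $\tau_{0}=[a]$, $\tau_{4}=[b_1]$, empty blocks in between) gives $\omega\cap\sigma=\pm\,\sigma_{|[a,b_1]}$, and this simplex satisfies $\|\sigma_{|[a,b_1]}\|_{2}=0>\dim[a,b_1]-2+(\ov{t}+\ov{t})(2)=-1$: it is not $(\ov{p}+\ov{q})$-allowable. This is precisely your problematic case $\varepsilon_{n-\ell}=1$ (here $\ell=2$, $\ell_{0}=4$), and it shows the transport from $\ell_{0}$ back to $\ell$ cannot exist: it would require $(\ov{p}+\ov{q})(\ell_{0})-(\ov{p}+\ov{q})(\ell)\leq \ell_{0}-\ell$, whereas a \emph{sum} of two perversities may grow with slope $2$ (for $\ov{t}+\ov{t}$ it always does). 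So this is not a delicate verification left to the reader; it is an obstruction. The proposition cannot be proved allowable-simplex by allowable-simplex: the hypothesis that $\xi$ \emph{and} $\partial\xi$ are $\ov{q}$-allowable is essential (in the example the faces $[a,b_i]$ are not $\ov{t}$-allowable, so $\sigma\notin C_{*}^{\ov{t}}(X)$), and the proof must exhibit cancellation of the offending simplices between the different summands of $\omega\cap\xi$, using the compatibility $\delta_{\ell}^{*}\omega_{\sigma}=\omega_{\partial_{\ell}\sigma}$ of the global cochain.

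The chain-map half has the same hole, located exactly where you wrote that ``the only subtlety'' is the vanishing on non-regular faces. The boundary of the prism $\tc\Delta_{0}\times\dots\times\tc\Delta_{n-1}\times\Delta_{n}$ contains, besides the blow-ups of the regular codimension-one faces of $\Delta$, the \emph{exceptional} faces $\tc\Delta_{0}\times\dots\times\Delta_{j}\times\dots\times\Delta_{n}$ (the bases of the cone factors, $j<n$), which lie over no face of $\Delta$; the non-regular faces of $\Delta$, on the other hand, never even occur in $\partial\tDelta$, since $\partial[\Delta_{n}]=0$ when $\dim\Delta_{n}=0$. Your Leibniz formula is correct at the blow-up level, but after applying $\mu^{\Delta}_{*}$ the exceptional terms need not die: in the example above, capping $\omega_{\sigma}$ with the base face $[a]\otimes[\tv_{1}]\otimes[\tv_{2}]\otimes[\tv_{3}]\otimes[b_0b_1]$ and applying $\mu^{\Delta}_{*}$ yields $\pm[a]\neq 0$. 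Correspondingly $\partial(\omega\cap\sigma)=\pm\bigl(\sigma_{|[b_1]}-\sigma_{|[a]}\bigr)$, while $(d\omega)\cap\sigma$ is a multiple of $\sigma_{|[b_1]}$ and $\omega\cap\partial\sigma=0$: the Leibniz identity fails for this allowable pair, the defect $\sigma_{|[a]}$ being a chain in the singular set. Whatever argument \cite{CST4} uses, it must identify these exceptional contributions and prove that their total sum cancels when $\omega$ is of $\ov{p}$-intersection and $\xi$ of $\ov{q}$-intersection; that mechanism, which is the actual content of the proposition, is absent from your proposal.
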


\begin{remark}\label{rem:capcone}
Let $i\leq n-1$ and suppose $\Delta_{i}\neq \emptyset$.
The cone vertex $\tv_{i}$ belongs to the face $(F_{i},1)$ of $\tc\Delta_{i}$ and,
by convention, we write it as the last summit of $(F_{i},1)$, i.e.
$(F_{i},1)=[a_{j_{0}},\dots,a_{j_{r}},\tv_{i}]$
with $F_{i}=[a_{j_{0}},\dots,a_{j_{r}}]$.
If $F_{i}\neq \emptyset$, we denote by $G_{i}$ the face of $\Delta_{i}$ such that the cup product satisfies
$\1_{F_{i}}\cup \1_{G_{i}}=\1_{\Delta_{i}}$. 
From the definition of the cap product we have
\begin{equation}\label{equa:cap1}
\1_{(F_{i},\varepsilon_{i})}\cap [\tc\Delta_{i}]=\left\{
\begin{array}{cl}
(G_{i},1)&\text{if } \varepsilon_{i}=0,\\
{[ \tv_{i} ]} & \text{if }\varepsilon_{i}=1 \text{ and } F_{i}=\Delta_{i},
\\
0&\text{otherwise.}
\end{array}\right.
\end{equation}
Similarly, if $\Delta_{i}=\emptyset$ we have $\1_{(\emptyset,1)}\cap [\tc \emptyset]=[\tv_{i}]$.
\end{remark}
\section{Proofs of  Theorems  \ref{thm:factorisation} and  \ref{thm:0t}}\label{sec:proofs}

To define the morphism $\cM_{\ov{p}}^*$ of (\ref{equa:factorisation}), 
we  consider the morphism dual of $\mu_{*}^{\Delta}$ denoted
$\mu^*_{\Delta}\colon N^*(\Delta)\to \tN^*(\Delta)$.
Let
$$\tN_{\ov{0}}^*(\Delta)=\left\{\omega\in\tN^*(\Delta)\mid
\|\omega\|_{\ell}\leq 0 \text{ and }
\|d \omega\|_{\ell}\leq 0
\text{ for all } \ell\in\{1,\dots,n\}
\right\}.$$
The next result is in the spirit
of a theorem of Verona, see \cite{MR0290375}.

\begin{proposition}\label{prop:lau}
Let $\Delta=\Delta_{0}\ast\dots\ast \Delta_{n}$ be a regular filtered simplex. Then the chain map,
$$
\mu^*_{\Delta} \colon  N^*(\Delta) \to    \tN^*_{\ov 0}({\Delta})\subset \tN^*(\Delta),
$$
is an isomorphism.
\end{proposition}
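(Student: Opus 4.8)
The plan is to work entirely at the level of the regular simplex $\Delta=\Delta_0\ast\dots\ast\Delta_n$ and to read off $\mu^*_\Delta$ from its effect on basis elements. Since $\mu^*_\Delta$ is dual to the chain map $\mu_*^\Delta$ of \cite[Proposition 11.10]{CST4}, it is automatically a cochain map, and on the canonical basis $\{\1_G\}$ of $N^*(\Delta)$ indexed by the faces $G=G_0\ast\dots\ast G_n$ of $\Delta$ it is given by $\mu^*_\Delta(\1_G)=\sum_{(F,\varepsilon)\in B_G}\1_{(F,\varepsilon)}$, where $B_G=(\mu_*^\Delta)^{-1}(G)$. First I would make $B_G$ explicit from (\ref{equa:eclatement}): setting $\ell=\max\{i\mid G_i\neq\emptyset\}$, a basis element lies in $B_G$ exactly when $F_i=G_i$ with $\varepsilon_i=1$ for $i<\ell$, $F_\ell=G_\ell$ with $\varepsilon_\ell=0$, and each position $i>\ell$ is a \emph{dimension-zero} choice: a pair $(v,0)$ with $v$ a vertex of $\Delta_i$, or $(\emptyset,1)$, resp. a vertex of $\Delta_n$ when $i=n$.

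The next step is a key lemma identifying the allowable cochains. Using that $\dim F_i+\varepsilon_i\geq 0$ on every basis element, I would show that $\|\1_{(F,\varepsilon)}\|_\ell\leq 0$ for all $\ell$ holds \emph{if and only if} $\mu_*^\Delta(F,\varepsilon)\neq 0$, both being equivalent to the vanishing of $\dim F_i+\varepsilon_i$ for every $i$ above the first index $\ell$ with $\varepsilon_\ell=0$. Hence $A:=\{\omega\mid \|\omega\|_\ell\leq 0\ \forall\ell\}$ is spanned by $\{\1_{(F,\varepsilon)}\mid \mu_*^\Delta\neq 0\}=\bigoplus_G A_G$, with $A_G$ spanned by $B_G$, and $\tN^*_{\ov 0}(\Delta)=A\cap d^{-1}(A)$. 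Because $\im\mu^*_\Delta\subseteq A$ and $d\mu^*_\Delta=\mu^*_\Delta d$ keeps the image in $A$, the image is contained in $\tN^*_{\ov 0}(\Delta)$; and since the fibres $B_G$ are disjoint and nonempty (regularity provides a vertex of $\Delta_n$), the cochains $\mu^*_\Delta(\1_G)$ have disjoint supports, so $\mu^*_\Delta$ is injective.

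It remains to prove surjectivity onto $\tN^*_{\ov 0}(\Delta)$, which I expect to be the crux. Writing $\omega=\sum_G\omega_G$ with $\omega_G\in A_G$ and letting $\pi$ denote the projection onto $\tN^*(\Delta)/A$, I would first check that $\pi d$ does not mix faces: differentiating a basis element of $B_G$ below the pivot $\ell$, at $\ell$, or by switching $\varepsilon_\ell$ from $0$ to $1$ always yields an allowable coface (killed by $\pi$), so that the only non-allowable cofaces arise from differentiating a free position $i>\ell$, and these retain the pivot and hence the face $G$. Thus $\pi d|_A=\bigoplus_G \pi d|_{A_G}$, and the condition $d\omega\in A$ becomes $\pi d(\omega_G)=0$ for each $G$ separately. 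Establishing this ``no crossing'' statement — that $\ov 0$-allowability of the coboundary imposes constraints fibrewise — is the delicate point.

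To conclude, I would identify $A_G\cong\bigotimes_{i>\ell}V_i$ with $V_i=N^0(\tc\Delta_i)$ for $\ell<i<n$ and $V_n=N^0(\Delta_n)$, the free module on the dimension-zero choices, and observe that under this identification $\pi d|_{A_G}$ is, up to sign, the total simplicial coboundary $\sum_i\id\otimes\cdots\otimes d_i\otimes\cdots$, each local $d_i\colon N^0\to N^1$ landing in the distinct summand of $\tN^*(\Delta)/A$ recording which position is bumped to dimension~$1$. Therefore $\ker(\pi d|_{A_G})=\bigotimes_{i>\ell}\ker d_i$, and since every $\tc\Delta_i$ and $\Delta_n$ is connected, $\ker(d_i\colon N^0\to N^1)$ is the line of constant $0$-cochains, generated by the sum of all its vertices. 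This tensor product over the free positions is exactly $\mu^*_\Delta(\1_G)$, so $\ker(\pi d|_{A_G})=R\cdot\mu^*_\Delta(\1_G)$; summing over $G$ gives $\tN^*_{\ov 0}(\Delta)=\im\mu^*_\Delta$ and completes the proof.
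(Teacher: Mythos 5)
Your proposal is correct and is essentially the paper's own proof: the paper likewise characterizes the $\ov{0}$-allowable basis cochains as those whose entries above the pivot have dimension zero, reads off injectivity from the resulting disjoint (nonempty, by regularity) supports, and obtains surjectivity by noting that allowability of $d\omega$ forces the coefficients over the free positions to form a degree-zero cocycle of the tail tensor product, hence to be constant by connectedness of the cones $\tc\Delta_i$ and of $\Delta_n$ --- your ``no crossing'' lemma being exactly the point the paper uses implicitly when it treats each face $F$ separately. The only discrepancy is that the actual formula (\ref{equa:lemusurG}) for $\mu^*_{\Delta}(\1_{F})$ carries a Koszul sign $(-1)^{\nu(F)}$ determined by (\ref{equa:lesignemu}); since this sign depends only on $F$ and not on the vertices chosen above the pivot, it is a single unit on each fibre $B_{G}$, so your unsigned span and kernel arguments are unaffected.
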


\begin{proof}
Let  $F = F_0 \ast\dots\ast F_s$ be a face of $\Delta$ with $s\leq n$ and $F_{s}\neq \emptyset$. 
By definition of $\mu^*_{\Delta}$, we have
\begin{equation}\label{equa:lemusurG}
\mu^*_{\Delta}(\1_{F}) = 
\sum_{(a_{j_{s+1}},\dots,a_{j_{n}})}
(-1)^{\nu(F)}
\1_{(F_{0},1)}\otimes\dots\otimes \1_{(F_{s-1},1)}\otimes \1_{(F_{s},0)}
\otimes \1_{[a_{j_{s+1}}]}
\otimes\dots\otimes
\1_{[a_{j_{n}}]},
\end{equation}
where the $a_{j_{i}}$'s run over the vertices of $\tc\Delta_{i}$ if $i\in\{s+1,\dots,n-1\}$  and $a_{j_{n}}$ over the vertices of $\Delta_{n}$.
The sign is defined by
\begin{equation}\label{equa:lesignemu}
\left(\1_{(F_{0},1)}\otimes\dots\otimes \1_{(F_{s-1},1)}\otimes\1_{(F_{s},0)}\right)
\left([\tc F_{0}]\otimes\dots\otimes [\tc F_{s-1}]\otimes [F_{s}]\right)=(-1)^{\nu(F)}.
\end{equation}
From \defref{def:degrepervers}, we observe 
$\|\mu^*_{\Delta}(\1_{F})\|_{\ell}\leq 0$
for any $\ell\in\{1,\dots,n\}$
and the injectivity of $\mu^*_{\Delta}$.

Consider now  a cochain $\omega\in \tN^*_{\ov{0}}(\Delta)$. We
have to prove that $\omega$ belongs to the image of $\mu^*_{\Delta}$. 
Since $\|\omega\|_\ell \leq 0$ for each $\ell \in\{1,\ldots, n\}$,
we may write $\omega$ as a sum of
$$\omega_{F}=\sum_{(a_{j_{s+1}},\dots,a_{j_{n}})}
\lambda^F_{(a_{j_{s+1}},\dots,a_{j_{n}})}
\1_{(F_{0},1)}\otimes\dots\otimes \1_{(F_{s-1},1)}\otimes \1_{(F_{s},0)}
\otimes \1_{[a_{j_{s+1}}]}
\otimes\dots\otimes
\1_{[a_{j_{n}}]}
$$
where $\lambda^F_{(a_{j_{s+1}},\dots,a_{j_{n}})}\in R$ and $F=F_{0}\ast\dots\ast F_{s}$
with $F_{s}\neq \emptyset$
runs over the faces of $\Delta$.
Since $\omega\in \tN^*_{\ov{0}}(\Delta)$, we have 
$\|d\omega\|_{\ell}\leq 0$ for any $\ell\in \{1,\dots,n\}$.
Let $F=F_{0}\ast\dots\ast F_{s}$ with $F_{s}\neq \emptyset$ being fixed.
Since $d\1_{[a_{i}]}$ is of degree~1,
for having $\|d\omega\|_{\ell}\leq 0$ for any $\ell\in \{1,\dots,n\}$, we must have
$$
\sum_{(a_{j_{s+1}},\dots,a_{j_{n}})}
\lambda^F_{(a_{j_{s+1}},\dots,a_{j_{n}})}
(-1)^{|F|+s+1}
\1_{(F_{0},1)}\otimes\dots\otimes 
\1_{(F_{s},0)}
\otimes 
d(\1_{[a_{j_{s+1}}]}
\otimes\dots\otimes
\1_{[a_{j_{n}}]})=0,
$$
which implies
$$
\sum_{(a_{j_{s+1}},\dots,a_{j_{n}})}
\lambda^F_{(a_{j_{s+1}},\dots,a_{j_{n}})}
d(\1_{[a_{j_{s+1}}]}
\otimes\dots\otimes
\1_{[a_{j_{n}}]})=0.
$$
As, up to a multiplicative constant, there exists only one cocycle in degree zero in this tensor product, 
all the coefficients are equal, i.e.
there exists $\lambda_{F}\in R$ such that
$$\lambda_{F}=\lambda^F_{(a_{j_{s+1}},\dots,a_{j_{n}})},$$
for any $(n-s)$-uple of vertices $(a_{j_{s+1}},\ldots,a_{j_{n}})$.
Therefore, we may write
$$\omega=\mu^*_{\Delta}\left(\sum_{F\fa \Delta}\lambda_{F}\1_{F}\right)$$
and $\omega$ is in the image of $\mu^*_{\Delta}$.
\end{proof}

If $\omega\in \tN^*(\Delta)$ is the image by $\mu^*_{\Delta}$ of a cochain
$c\in N^*(\Delta)$, the intersection cap product coincides with the usual one.

 \begin{proposition}\label{prop:capcap}
 Let $\Delta=\Delta_{0}\ast\dots\ast \Delta_{n}$ be a regular filtered simplex of dimension~$m$.
For each cochain $c \in N^*(\Delta)$, we have
$$
\mu^*_{\Delta}(c)\cap\tDelta = c \cap [\Delta],
$$
where $c\cap [\Delta]$ comes from the usual cap product, $-\cap-\colon N^{*}(\Delta)\otimes N_{m}(\Delta)\to N_{m-*}(\Delta)$.
\end{proposition}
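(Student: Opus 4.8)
The plan is to reduce to a single basis cochain and compute both sides explicitly, isolating the whole difficulty in a final sign check. By linearity it suffices to treat $c=\1_F$ for one face $F$ of $\Delta$. I would write the induced join decomposition $F=F_0\ast\dots\ast F_s$ with $F_i\subseteq\Delta_i$, $F_s\neq\emptyset$ and $F_i=\emptyset$ for $i>s$. With the block ordering of the vertices of $\Delta=\Delta_0\ast\dots\ast\Delta_n$, the classical cap product $\1_F\cap[\Delta]$ is nonzero exactly when $F$ is a front face, i.e. $F_i=\Delta_i$ for all $i<s$ and $F_s$ is a front face of $\Delta_s$; denoting by $G_s$ the back face of $\Delta_s$ characterized by $\1_{F_s}\cup\1_{G_s}=\1_{\Delta_s}$, one then has $\1_F\cap[\Delta]=G_s\ast\Delta_{s+1}\ast\dots\ast\Delta_n$. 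So the goal is to show that $\mu^*_\Delta(\1_F)\cap\tDelta$ vanishes off the front-face case and equals this same chain inside it.

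First I would expand $\mu^*_\Delta(\1_F)$ by (\ref{equa:lemusurG}) and apply the factorwise formula (\ref{equa:lecapsurdelta}) for $\tcap$, evaluating each tensor factor through (\ref{equa:cap1}). For $i<s$ the factor $\1_{(F_i,1)}\cap[\tc\Delta_i]$ is the apex $[\tv_i]$ when $F_i=\Delta_i$ and vanishes otherwise; for $i=s$ the factor $\1_{(F_s,0)}\cap[\tc\Delta_s]$ equals $(G_s,1)$ when $F_s$ is a front face and vanishes otherwise; and for $s<i\leq n$ the summation index $a_{j_i}$ runs over the vertices of $\tc\Delta_i$ (of $\Delta_n$ when $i=n$), yet $\1_{[a_{j_i}]}\cap[\tc\Delta_i]$ is nonzero only for the initial vertex, where it gives the coned top cell $(\Delta_i,1)$ (resp. $\Delta_n$). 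Hence $\mu^*_\Delta(\1_F)\tcap\tDelta=0$ unless $F$ is a front face, and in that case the whole sum collapses to the single surviving summand
$$[\tv_0]\otimes\dots\otimes[\tv_{s-1}]\otimes(G_s,1)\otimes(\Delta_{s+1},1)\otimes\dots\otimes(\Delta_{n-1},1)\otimes\Delta_n,$$
up to the sign treated below. The degenerate case $\Delta_i=\emptyset$ for some $s<i<n$ is handled identically, its unique vertex being the apex and its factor contributing $[\tv_i]$.

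Next I would apply $\mu^\Delta_*$ to this surviving term. All its cone coordinates have $\varepsilon_i=1$ for $i<n$, so the index $\ell$ of (\ref{equa:eclatement}) is $n$; the map then discards every apex and replaces the tensor product by the join of the underlying faces $\emptyset,\dots,\emptyset,G_s,\Delta_{s+1},\dots,\Delta_n$, that is $G_s\ast\Delta_{s+1}\ast\dots\ast\Delta_n$. A short dimension count shows the nonvanishing condition $\dim(F,\varepsilon)=\dim(F_0\ast\dots\ast F_\ell)$ of (\ref{equa:eclatement}) is automatically satisfied, so as chains the two sides already agree; only the coefficient remains.

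The hard part will be the sign. On the surviving summand the composite produces the coefficient $(-1)^{\nu(F)}\,(-1)^{\nu(F,\varepsilon,\Delta)}$, where $\nu(F)$ is the Koszul sign fixed by the pairing (\ref{equa:lesignemu}) and $\nu(F,\varepsilon,\Delta)$ is the sign of (\ref{equa:lecapsurdelta}), while $\mu^\Delta_*$ contributes none; I would check that these two exponents have the same parity, so that the coefficient is $+1$, matching the classical cap product whose structure constants are all $+1$. This is pure Koszul bookkeeping: the defining pairing (\ref{equa:lesignemu}) for $\nu(F)$ encodes precisely the transpositions that move the coned cofaces past the coned faces, and these are the transpositions weighted in $\nu(F,\varepsilon,\Delta)$ by the block dimensions $\dim\Delta_j+1$, so they cancel on the surviving term. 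I expect this cancellation to demand the most care; it can be carried out either by comparing the two exponents directly on the displayed term, or inductively by peeling off the factor $\tc\Delta_0$ via the decomposition (\ref{equa:decomposelemu}) of $\mu^\Delta_*$ together with the matching splitting of $\tcap$, thereby reducing the depth from $n$ to $n-1$.
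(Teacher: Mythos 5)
Your proposal is correct, but it is organized differently from the paper's proof. The paper does not run the global computation: after noting that the case $n=0$ is clear, it invokes the factorization (\ref{equa:decomposelemu}) of $\mu^{\Delta}_{*}$ to reduce, inductively on the number of blocks, to the two-block case $\Delta=\Delta_{0}\ast\Delta_{1}$, which it then settles by exactly the kind of evaluation you describe (via \remref{rem:capcone}), split into the two cases $F_{1}\neq\emptyset$ and $F_{1}=\emptyset$. You instead treat all $n$ blocks at once: expand $\mu^*_{\Delta}(\1_{F})$ by (\ref{equa:lemusurG}), cap factorwise by (\ref{equa:lecapsurdelta}) and (\ref{equa:cap1}), identify the unique surviving summand, and push it through $\mu^{\Delta}_{*}$; your identification of the vanishing locus (front faces), of the surviving term, and of its image $G_{s}\ast\Delta_{s+1}\ast\dots\ast\Delta_{n}$ are all accurate, including the degenerate blocks $\Delta_{i}=\emptyset$ and the dimension count in (\ref{equa:eclatement}). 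Moreover the sign check you defer is genuinely easier than you anticipate: on the surviving summand one has $F_{j}=\Delta_{j}$ for $j<s$, so $\dim\Delta_{j}+1=|\1_{(F_{j},1)}|$, and since the vertex factors in positions $i>s$ have degree zero, both exponents are literally the same sum, $\nu(F)=\nu(F,\varepsilon,\Delta)=\sum_{0\leq j<i\leq s}|\1_{(F_{j},\varepsilon_{j})}|\,|\1_{(F_{i},\varepsilon_{i})}|$ --- the first by the Koszul evaluation (\ref{equa:lesignemu}) of factors whose cochain and chain degrees coincide, the second by its defining formula; no transposition-counting argument is needed. As for what each route buys: the paper's induction confines the Koszul bookkeeping to two tensor factors at a time, at the cost of a case split and of leaving implicit the compatibility of $\tcap$ and $\mu^*$ with the splitting (\ref{equa:decomposelemu}); your direct computation gives a closed description of both sides of the identity and makes transparent that the single front-face condition simultaneously governs the vanishing of both cap products and forces the sign to be $+1$.
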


\begin{proof}
The result is clear for $n=0$. By using (\ref{equa:decomposelemu}), it is sufficient to prove the result for $\Delta=\Delta_{0}\ast \Delta_{1}$.
Let $\1_{F_{0}\ast F_{1}}\in N^*(\Delta)$.
We use \remref{rem:capcone} in the next determinations.\\
-- We note $G_{1}$ the face of $\Delta_{1}$ such that the cup product
$\1_{F_{1}}\cup \1_{G_{1}}$ 
is equal to $\1_{\Delta_{1}}$.\\
-- The cap product with $\1_{(F_{0},1)}$ is not equal to zero only if $F_{0}=\Delta_{0}$. In this case, we
set $G_{0}=\emptyset$. If $\varepsilon_{0}=0$, we note $G_{0}$ the face of $\Delta_{0}$ such that 
$\1_{F_{0}}\cup \1_{G_{0}}= \1_{\Delta_{0}}$.

\medskip
We prove the statement by considering the various possibilities.

$\bullet$ Suppose $F_{1}\neq\emptyset$. As
$\left(\1_{(F_{0},1)}\otimes \1_{F_{1}}\right)
\left([\tc F_{0}]\otimes [F_{1}]\right)=
(-1)^{|\tc F_{0}|\,|F_{1}|},
$
we deduce from (\ref{equa:lemusurG}) and (\ref{equa:lesignemu})
$$\mu^*_{\Delta}\left(\1_{F_{0}\ast F_{1}}\right)=
(-1)^{|F_{1}|\,|\tc F_{0}|}
\1_{(F_{0},1)}\otimes \1_{F_{1}}$$
and 
\begin{eqnarray*}
\mu^*_{\Delta}(\1_{F_{0}\ast F_{1}})\cap (\tc\Delta_{0}\times \Delta_{1})
&=&
 \mu_*^{\Delta} (\mu_{\Delta}^* (\1_{F_0* F_1})   \tcap (\tc \Delta_0 \times  \Delta_1 ))
 \\[.1cm]
&=&
(-1)^{|F_1||\tc F_0|}  \mu_*^{\Delta} ( ( \1_{(F_0,1)}\otimes  \1_{F_1} )\tcap (\tc \Delta_0 \times  \Delta_1 ))\\
&=&
(-1)^{|F_1|(|\tc F_0|+|\tc \Delta_{0}|)}  \mu_*^{\Delta} 
( \1_{(F_0,1)}  \cap [\tc \Delta_0]) \otimes ( \1_{F_1} \cap [ \Delta_1] )\\
&=_{(1)}&
\left\{
\begin{array}{ll}
\mu_*^{\Delta}( [\tv_{0}] \otimes G_1 ) & \hbox{if $F_{0}=\Delta_0$, 
}\\
0&\hbox{otherwise},
\end{array}
\right.\\
&=&
\left\{
\begin{array}{ll}
G_1 & \hbox{if $F_{0}=\Delta_0$, 
}\\
0&\hbox{otherwise},
\end{array}
\right.\\
&=&
 \1_{F_0* F_1}  \cap [\Delta_0*\Delta_1].
\end{eqnarray*}

$\bullet$ If $F_1 = \emptyset$, we have
$\mu^*_{\Delta}\left(\1_{F_{0}\ast\emptyset}\right)=
\sum_{[a_{j_{1}}] \fa \Delta_1} \1_{(F_0,0)}\otimes \1_{[a_{j_{1}}]} 
$
and
\begin{eqnarray*}
\mu^*_{\Delta}(\1_{F_{0}})\cap (c\Delta_{0}\times \Delta_{1})
&=&
 \mu_*^{\Delta} (\mu_{\Delta}^*( \1_{F_0} )  \tcap (\tc \Delta_0 \times  \Delta_1 ))
 \\
&=&
 \mu_*^{\Delta} 
 \left( (
\sum_{[a_{j_{1}}] \fa \Delta_1} \1_{(F_0,0)}\otimes \1_{[a_{j_{1}}]}   ) 
\tcap (\tc \Delta_0 \times  \Delta_1 )
\right)\\
&=& 
 \mu_*^{\Delta} 
 \left( 
\sum_{[a_{j_{1}}]\fa \Delta_1} (\1_{(F_0,0)}  \cap [\tc \Delta_0]  )\otimes (\1_{[a_{j_{1}}]}  \cap [\Delta_1] ) 
\right)\\
&=_{(1)}&
\mu_*^{\Delta}(  (G_0,1)  \otimes [ \Delta_1] ) 
=
G_0*\Delta_1
=
\1_{F_0}  \cap [\Delta_0*\Delta_1].
\end{eqnarray*}
(The two equalities $=_{(1)}$ are consequences of \remref{rem:capcone}.)
\end{proof}

Let us recall that  $C^*(X)$ is the complex of filtered singular cochains with coefficients in $R$.

\begin{proposition}\label{prop:Cero}
Let $X$ be a normal compact pseudomanifold. Then, the operator 
$
\cM_{\ov{0}} \colon  C^*(X) \to  \tN_{\ov 0}^*(X),
$
defined by $\cM_{\ov{0}}(c)_\sigma = \mu^*_{\Delta}(\sigma^*(c))$
for any regular filtered simplex $\sigma\colon \Delta\to X$,
is a  chain map which induces an isomorphism 
$$ \cM^*_{\ov{0}}\colon H^*(X) \xrightarrow{\cong}
  H_{\TW,\ov{0}}^*(X).
$$
\end{proposition}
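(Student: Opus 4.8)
The plan is to separate the formal properties of $\cM_{\ov 0}$ from the cohomological isomorphism, and to reduce the latter to the local picture controlled by \propref{prop:lau} via a gluing argument, with the cone computation as the decisive step where normality is used.

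First I would check that $\cM_{\ov 0}$ is well defined, i.e. that the assignment $(\cM_{\ov 0}(c))_{\sigma}:=\mu^*_{\Delta}(\sigma^*(c))$ really assembles into an element of $\tN^*(X)$. For a regular face operator $\delta_{\ell}\colon\Delta'\to\Delta$ (so $\Delta'_n\neq\emptyset$) one has $(\cM_{\ov 0}(c))_{\partial_{\ell}\sigma}=\mu^*_{\Delta'}(\delta_{\ell}^*\sigma^*(c))$, so the required compatibility $\delta_{\ell}^*((\cM_{\ov 0}(c))_{\sigma})=(\cM_{\ov 0}(c))_{\partial_{\ell}\sigma}$ amounts to the naturality identity $\delta_{\ell}^*\circ\mu^*_{\Delta}=\mu^*_{\Delta'}\circ\delta_{\ell}^*$ on $N^*(\Delta)$. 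This is the cochain dual of the compatibility of $\mu^{\Delta}_{*}$ with regular faces already used to build the global cap product in \cite{CST4}, and I would either cite it or verify it directly from (\ref{equa:eclatement}). That $\cM_{\ov 0}$ is a chain map is then immediate, since $\mu^*_{\Delta}$ is a chain map (dual of \cite[Proposition 11.10]{CST4}), $\sigma^*$ commutes with $d$, and the differential on $\tN^*(X)$ is computed simplexwise. Finally $\cM_{\ov 0}$ lands in $\tN^*_{\ov 0}(X)$: by \propref{prop:lau} each $\mu^*_{\Delta}(\sigma^*c)$ has $\ell$-perverse degree $\leq 0$ and so does its differential, and the perverse degree of an element of $\tN^*(X)$ is the supremum over $\sigma$ of the simplexwise ones.

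For the isomorphism I would exploit that \propref{prop:lau} is an isomorphism of complexes \emph{at the level of each regular simplex}. Transporting the compatibility conditions through these isomorphisms (this uses exactly the naturality identity of the first step), the target $\tN^*_{\ov 0}(X)$ is identified with the complex of compatible families of ordinary cochains carried by regular filtered simplices and their regular faces; under this identification $\cM_{\ov 0}$ becomes the tautological restriction $c\mapsto(\sigma^*c)_{\sigma}$ from all filtered simplices to the regular ones. The whole problem thus reduces to showing that this restriction is a quasi-isomorphism for a normal compact $X$. To handle it I would use a local-to-global (bootstrap) argument: both $H^*(-)$ and $H^*_{\TW,\ov 0}(-)$ are functors on the open subsets of $X$ satisfying Mayer--Vietoris, $\cM_{\ov 0}$ is a natural transformation compatible with the two sequences, and by compactness $X$ has finitely many strata, so a finite induction on the depth of the stratification reduces everything to distinguished neighborhoods. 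On an open subset contained in the regular part $X\setminus X_{n-2}$ the filtration is trivial, every simplex is regular with $\Delta=\Delta_n$, the map $\mu^*_{\Delta}$ is the identity, and $\cM_{\ov 0}$ is an isomorphism on the nose; the inductive step over a neighborhood $U\times\rc L$ of a singular stratum reduces, by the product structure and homotopy invariance, to the cone $\rc L$.

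The hard part will be precisely this cone computation, and it is where normality enters. The zero-perversity allowability condition forces the usual intersection cone formula, and one must verify that, because $L$ is connected (normality), $H^*_{\TW,\ov 0}(\rc L)$ collapses to the cohomology of a point and matches $H^*(\rc L)$, with $\cM_{\ov 0}$ realizing the comparison. A disconnected link would produce a discrepancy in degree $0$, which is exactly the phenomenon normality excludes; compactness ensures the induction terminates. Granting the cone computation, Mayer--Vietoris together with the five lemma propagate the isomorphism from the distinguished neighborhoods to all of $X$, yielding the desired $\cM^*_{\ov 0}\colon H^*(X)\xrightarrow{\cong}H^*_{\TW,\ov 0}(X)$.

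I would keep in mind two checkpoints that could hide subtleties: the naturality $\delta_{\ell}^*\mu^*_{\Delta}=\mu^*_{\Delta'}\delta_{\ell}^*$ in the first step (needed both for well-definedness and for the identification of $\tN^*_{\ov 0}(X)$ with the regular restriction), and the exact bookkeeping of the cone formula for $H^*_{\TW,\ov 0}(\rc L)$ against connectedness of $L$. Everything else is formal once \propref{prop:lau} is in hand.
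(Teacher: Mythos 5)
Your proposal is correct and follows essentially the same route as the paper: the formal part (well-definedness via naturality of $\mu^*_{\Delta}$ under regular face operators, the chain-map property from the dual of \cite[Proposition 11.10]{CST4}, and landing in $\tN^*_{\ov 0}(X)$ via Proposition~\ref{prop:lau}), followed by a King-type Mayer--Vietoris bootstrap reducing the isomorphism to the local computation on distinguished neighborhoods $\R^i\times\rc L$, which is exactly what the paper does by checking the hypotheses of \cite[Proposition 8.1]{CST4}. The only point to tighten is your degree-zero cone step: the statement actually needed is connectedness of the regular part $L\setminus\Sigma$ of the link (which holds because links of a normal pseudomanifold are themselves normal and connected, cf.\ \cite[Lemma 2.6.3]{FriedmanBook}), and this is precisely how the paper concludes $H^0_{\TW,\ov 0}(L)=R$.
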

We denote by $ \cM_{\ov{p}}\colon C^*(X)\to \tN^*_{\ov{p}}(X)$
the composition of $\cM_{\ov{0}}$ with the canonical inclusion of complexes and by
$ \cM^*_{\ov{p}}\colon H^*(X)\to H^*_{\TW,\ov{p}}(X)$
the induced morphism.

\begin{proof}
The maps $\mu_{*}^{\Delta}$ being compatible with restrictions, the map $\cM_{\ov{0}}$ is well defined. Its compatibility
with the differentials is a consequence of \cite[Proposition 11.10]{CST4}
and its behaviour with perversities a consequence of \propref{prop:lau}. 
For proving the isomorphism, we use a method similar to an argument of  H.~King in  \cite{MR800845},
see also \cite[Section 5.1]{FriedmanBook}. We have to check the hypotheses of \cite[Proposition 8.1]{CST4}. 

$\bullet$  The first one is the existence of Mayer-Vietoris sequences. This is clear for $C^*(-)$ and
has been proved in \cite[ThŽ\'eor\`eme A]{CST4}
for $\tN^*_{\ov{0}}(-)$.

$\bullet$ The second and the fourth hypotheses are straightforward.

$\bullet$ The third one consists in the computation of the intersection cohomology of a cone.  
Let $L$ be a compact connected pseudomanifold. It is well known that
$H^0(\R^i\times \rc L)=R$ and $H^i(\R^i\times \rc L)=0$ if $i>0$.
From \cite[Propositions 6.1 and 7.1]{CST4}, we have
$$\oplus_{k\geq 0}H^k_{\TW,\ov{0}}(\R^i\times \rc L)=H^0_{\TW,\ov{0}}(L).$$
If $\omega\in \tN_{\ov{0}}^0(L)$ is a cocycle, the cochain $\omega_{\sigma}$ is constant for any regular simplex
$\sigma\colon \Delta\to L$. As $\sigma$ has a non-empty intersection with the regular part, the connectedness of 
$L\backslash \Sigma$ (see \cite[Lemma 2.6.3]{FriedmanBook})
implies $H^0_{\TW,\ov{0}}(L;R)=R$.
\end{proof}

\propref{prop:Cero} remains true for normal CS-sets, with the same proof. We do not introduce this notion here,
see \cite{MR0319207} for its definition.

\begin{proof}[Proof of \thmref{thm:factorisation}]
Let $\ov{p}$ be a perversity.  
We consider the following diagram,
\begin{equation}\label{equa:thmA}
\xymatrix{
\ar[rr]^{-\cap [X] }  H^*(X) \ar[d]_{\cM^*_{\ov{0}}}
\ar@/^-3pc/[dd]_-{\cM^*_{\ov{p}}}
&&  
H_{n-*}(X)\\
H^*_{\TW,\ov{0}}(X)  \ar[d]_{\alpha_{\ov{0},\ov{p}}}
&&
 H ^{\ov{t}}_{n-*}(X) 
 \ar[u]_{\beta^{\ov{t}}}
 \\
 H^*_{\TW,\ov{p}}(X) \ar[rr]^{-\cap [X]}_{\cong}
 &&
 H^{\ov{p}}_{n-*}(X) \ar[u]_{\beta^{\ov{p},\ov{t}}},
}
\end{equation}
 where $\alpha_{\ov{0},\ov{p}}$, $\beta^{\ov{p},\ov{t}}$ and $\beta^{\ov{t}}$
 are induced by the natural inclusions of complexes. 
The bottom isomorphism comes from \cite[Th\'eŽor\`eme D]{CST4}. 
It remains to check the commutativity of this diagram.
Consider a cochain $c \in C^*(X)$ and a regular simplex $\sigma \colon \Delta_{\sigma} \to X$. 
(The chain $[X]$ being  $\ov 0$-allowable, each simplex in its decomposition is regular.)
We have
\begin{eqnarray}
\cM^*_{\ov{0}}(c)\cap\sigma
&=_{(1)}&
\sigma_{*}\left(\cM^*_{\ov{0}}(c)_{\sigma}\cap\tDelta_{\sigma}\right)
=_{(2)}
\sigma_{*}\left(\mu^*_{\Delta_{\sigma}}({\sigma}^*(c))\cap\tDelta_{\sigma}\right)
\nonumber
\\
&=_{(3)}&
\sigma_*\left({\sigma^*(c)}\cap [\Delta_{\sigma}]\right)
=_{(4)}
c\cap\sigma,\label{equa:M}
\end{eqnarray}
where (1) is \defref{def:globalcap}, (2) comes from the definition of $\cM^*_{\ov{0}}$,
(3) comes from \propref{prop:capcap}.
 and (4) is a property of the classical cap product.
\end{proof}

\begin{proof}[Proof of \thmref{thm:0t}]
By specifying the diagram (\ref{equa:thmA}) to the case $\ov{p}=\ov{0}$, we get the next commutative diagram.
$$
\xymatrix{
\ar[rr]^{-\cap [X]} \ar[dd]_{\cM^*_{\ov{0}}}^\cong H^*(X)
&&
H_{n-*}(X)\\
&&
H^{\ov{t}}_{n-*}(X) \ar[u]_{\beta^{\ov{t}}}^{\cong}\\
H^*_{\TW,\ov{0}}(X)\ar[rr]^{-\cap [X]}_{\cong}
&&
\ar[u]_{\beta^{\ov{0},\ov{t}}} H^{\ov{0}}_{n-*}(X).
}
$$
From \propref{prop:Cero}, \cite[Proposition 5.5]{CST3} and \cite[Th\'eor\`eme D]{CST4}, we get that  $\cM^*_{\ov{0}}$, $\beta^{\ov{t}}$ and the bottom map are isomorphisms. This ends the proof.
 \end{proof}

\section{Remarks and comments}\label{sec:examples}

\paragraph{\bf Cup product and intersection product}
In \cite{CST1}, \cite{CST4}, we define from the local structure on the Euclidean simplices, a cup product in intersection 
cohomology, induced by a chain map
\begin{equation}\label{equa:cup}
\cup\colon \tN^{k_{1}}_{\TW,\ov{p}_{1}}(X)
\otimes
\tN^{k_{2}}_{\TW,\ov{p}_{2}}(X)
\to
\tN^{k_{1}+k_{2}}_{\TW,\ov{p}_{1}+\ov{p}_{2}}(X).
\end{equation}
If $X$ is a compact oriented $n$-dimensional pseudomanifold, the Poincar\'e duality induces 
(see \cite[Section VIII.13]{MR0415602}) an intersection product
on the intersection homology, defined by the commutativity of the next diagram.
\begin{equation}\label{equa:intersectionprod}
\xymatrix{
H^{k_{1}}_{\TW,\ov{p}_{1}}(X)
\otimes
H^{k_{2}}_{\TW,\ov{p}_{2}}
\ar[rr]^-{\cup}
\ar[d]_{-\cap [X]\otimes -\cap [X]}^-{\cong}
&&
H^{k_{1}+k_{2}}_{\TW,\ov{p}_{1}+\ov{p}_{2}}(X)
\ar[d]^{-\cap [X]}_{\cong}
\\
H^{\ov{p}_{1}}_{n-k_{1}}(X)
\otimes
H^{\ov{p}_{2}}_{n-k_{2}}(X)
\ar[rr]^-{\pitchfork}
&&
H^{\ov{p}_{1}+\ov{p}_{2}}_{n-k_{1}-k_{2}}(X).
}
\end{equation}
Let $[\omega],[\eta]\in H^*(X)$ and $\alpha^{\ov{p}}=(-\cap [X])\circ \cM^*_{\ov{p}}$.
From this definition, we deduce
\begin{equation}\label{equa:alfafork}
\alpha^{2\ov{p}}([\omega]\cup [\eta])=
\alpha^{\ov{p}}([\omega])\pitchfork\,\alpha^{\ov{p}}([\eta]),
\end{equation}
which is the analogue of the decomposition established in \cite{MR572580} in the PL case.

\medskip
\paragraph{\bf Lattice structure on the set of perversities}
In diagram (\ref{equa:factorisation}), the cap product
$-\cap [X]\colon H^*(X)\to H_{n-*}(X)$ is factorized as $\beta^{\ov{p}}\circ\alpha^{\ov{p}}$
where $\beta^{\ov{p}}$ comes from the inclusion of complexes and
the data $\ov{p}\mapsto \alpha^{\ov{p}}$ is compatible with the lattice structure on the set of perversities.
More precisely, if $\ov{p}_{1}\leq \ov{p}_{2}$, we have a commutative diagram,
$$
\xymatrix{
&H^*(X)\ar@{-->}[dd]_-{\cM^*_{\ov p_1}}
\ar@{.>}[dddl]_-{\cM^*_{\ov p_2}}
\ar@{.>}[dddr]^-{\alpha^{\ov{p}_{2}}}
\ar@{-->}[rrdd]^-{\alpha^{\ov{p}_{1}}}
&&\\
&&&\\
&H^*_{\TW,\ov{p}_{1}}(X)
\ar@{-->}[rr]^-{-\cap [X]}
\ar[ld]^-{\alpha_{\ov{p}_{1},\ov{p}_{2}}}
&&
H^{\ov{p}_{1}}_{n-*}(X)\ar[ld]^-{\beta^{\ov{p}_{1},\ov{p}_{2}}}
\\
H^*_{\TW,\ov{p}_{2}}(X)
\ar@{.>}[rr]_-{-\cap [X]}
&&
H^{\ov{p}_{2}}_{n-*}(X),
&}$$
with $\alpha^{\ov{p}_{2}}=\beta^{\ov{p}_{1},\ov{p}_{2}}\circ \alpha^{\ov{p}_{1}}$. 

\medskip
\paragraph{\bf A second intersection cohomology}
Alternately, as cohomology theory, we could  choose (see \cite{FriedmanBook}, \cite{zbMATH06243610}) 
the dual complex of the intersection chains instead of $\tN^*_{\ov{p}}(X)$. We denote
\begin{equation}\label{equa:GM2}
C^*_{\GM,\ov{p}}(X;R)=\hom(C_{*}^{\ov{p}}(X;R),R)
\text{ and } H^*_{\GM,\ov{p}}(X;R) \text{ its cohomology.}
\end{equation} 
In \cite[Th\'eor\`eme C]{CST4}, we have proved
$$H^*_{\TW,\ov{p}}(X;R)\cong H^*_{\GM,D\ov{p}}(X;R),$$
if $R$ is a field, or more generally
with an hypothesis on the torsion of the links, introduced in \cite{MR699009}. 
But, in the general case, these two cohomologies may differ. A natural question is the existence of a  factorization
of $\alpha^{\ov{p}}$ as above but in which the cohomology $H^*_{\GM,D\ov{p}}(-)$ is substituted to $H^*_{\TW,\ov{p}}(-)$. 
This question
arises in \cite[Section 8.1.6]{FriedmanBook} together with a nice development on this point,
that we are taking back partially in these lines.
The next example shows that such factorization does not occur if we ask for the compatibility with the lattice structure.

\begin{example}
Recall that a cap product in the setting of $C^*_{\GM,\ov{p}}(X;R)$ has been introduced by G. Friedman and
J. McClure (see \cite{zbMATH06243610}) when $R$ is a field. 
In the case of a commutative ring, its definition requires a condition on the torsion
(see \cite{FriedmanBook}) which is satisfied in the case of the top perversity.
This justifies the existence of the horizontal isomorphism in the diagram below.

Consider the $4$-dimensional  compact oriented  pseudomanifold $X = \Sigma \R{\mathbb P}^3$ with 
 $R=\Z$, $\ov{p}_{1}=\ov{0}$, $\ov{p}_{2}=\ov{1}$. The analog of the previous diagram can be written as
$$\xymatrix{
H^3(X)=\Z_{2}
\ar[d]_{\cM^3_{\GM,\ov{2}}}^{\cong}
\ar[rrd]^{\alpha^{\ov{0}}_{\GM}}
&\\
H^3_{\GM,\ov{2}}(X)=\Z_{2}
\ar[rr]^{-\cap [X]}_{\cong}
\ar[d]
&&
H^{\ov{0}}_{1}(X)=\Z_{2}
\ar[d]^{\beta^{\ov{0},\ov{1}}}_{\cong}
\\
H^3_{\GM,\ov{1}}(X)=0
&&
H^{\ov{1}}_{1}(X)=\Z_{2},
}$$
where $\cM^3_{\GM,\ov{2}}$ corresponds to the morphism $\omega^*_{\ov{2}}$ in 
\cite[Section 8.1.6]{FriedmanBook}.
Let $\alpha^{\ov{0}}_{\GM}=(-\cap [X])\circ \cM^3_{\GM,\ov{2}}$. Then the map 
$\alpha^{\ov{1}}_{\GM}= \beta^{\ov{0},\ov{1}}\circ \alpha^{\ov{0}}_{\GM}$ is an isomorphism which cannot
factorize through $H^3_{\GM,\ov{1}}(X)$.
\end{example}

\providecommand{\bysame}{\leavevmode\hbox to3em{\hrulefill}\thinspace}
\providecommand{\MR}{\relax\ifhmode\unskip\space\fi MR }
\providecommand{\MRhref}[2]{%
  \href{http://www.ams.org/mathscinet-getitem?mr=#1}{#2}
}
\providecommand{\href}[2]{#2}

\end{document}